\newtheorem{theorem}{Theorem}
\newtheorem{lemma}[theorem]{Lemma}
\newtheorem{observation}[theorem]{Observation}
\theoremstyle{remark}
\newcommand{\id}{\mathrm{id}}
\newcommand{\Sym}{\mathrm{Sym}}
\title{Asymmetric colouring of locally compact permutation groups}
\author{Florian Lehner}
\begin{document}

\maketitle

\begin{abstract}
     Let $G \leq \Sym (X)$ for a countable set $X$. 
     Call a colouring of $X$ asymmetric, if the identity is the only element of $G$ which preserves all colours. 
     The motion (also called minimal degree) of $G$ is the minimal number of elements moved by an element $g \in G \setminus\{\id\}$.
     We show that every locally compact permutation group with infinite motion admits an asymmetric $2$-colouring. This builds on, and generalises a recent result by Babai and confirms a conjecture by Imrich, Smith, Tucker, and Watkins from 2015.
\end{abstract}

\section{Introduction}

Let $X$ be a countable set and let $G \leq \Sym(X)$. A colouring of $X$ is called \emph{asymmetric}, if the identity is the only element of $G$ which preserves the colouring. The term asymmetric colouring was probably first introduced in \cite{babai-trees}, but the study of asymmetric colouring dates further back; for instance the proof of Frucht's theorem \cite{frucht} crucially relies on putting asymmetric `decorations' on the edges of a graph.

In 1996, Albertson and Collins reintroduced asymmetric colourings of graphs under the name \emph{distinguishing colourings} \cite{albertsoncollins}. Their paper sparked significant interest in the study of asymmetric graph colourings, leading to many nice results and conjectures, some of them by the author of the present paper \cite{babai-imc,ikt,istw,kwz,lehner-random,lehner-intermediate,lehner-edge,lps-bounded,russellsundaram,tucker-imc}. 

A notion that plays a role in many of these results and their proofs is the \emph{motion} (in permutation group theory more commonly known as the \emph{minimal degree}) of a permutation group. It is defined as the minimal number of elements moved by any $g \in G \setminus \{\id\}$. The connection between motion and asymmetric colourings was for instance made explicit in \cite{russellsundaram}, but it had been known much longer, see for instance \cite{cns,gluck}.

A conjecture by Tucker \cite{tucker-imc} which had been open for over a decade before it was recently settled by Babai \cite{babai-imc} asserts that motion and asymmetric colourings are also closely connected for infinite, locally finite graphs. 

\begin{theorem}[Babai \cite{babai-imc}]
If the automorphism group of a locally finite, connected graph has infinite motion on the vertex set $X$, then there is an asymmetric $2$-colouring of $X$. 
\end{theorem}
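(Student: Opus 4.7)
The plan is to produce an asymmetric $2$-colouring by the probabilistic method. The key difficulty is that $G = \mathrm{Aut}(\Gamma)$ need not be countable, so a direct union bound over $G \setminus \{\id\}$ is not available; the first task is therefore to reduce to a countable family of events using local compactness.

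Fix a base vertex $x_0 \in X$ and let $B_n$ denote the ball of radius $n$ around $x_0$, with pointwise stabiliser $G_n \leq G$. Since $\Gamma$ is locally finite, every $B_n$ is finite, hence $G_n$ is a compact open subgroup of $G$ in the topology of pointwise convergence, $G_{n+1}$ has finite index in $G_n$, and $\bigcap_n G_n = \{\id\}$. Every non-identity $g \in G$ lies in a unique set $G_n \setminus G_{n+1}$ (with $G_{-1} := G$), which is a finite disjoint union of $G_{n+1}$-cosets. This partitions $G \setminus \{\id\}$ into a \emph{countable} family $\{C_j\}_{j \in \N}$ of compact cosets $C_j = g_j G_{n_j + 1}$, reducing the problem to showing that the event $A_j = \{c : \exists h \in C_j,\ h \cdot c = c\}$ has probability zero for each $j$ under a suitable random colouring.

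Let $c$ be a uniformly random $2$-colouring produced by i.i.d.\ fair coin flips. For a single $h \neq \id$ with infinite motion, $\Prob[h \cdot c = c] = 0$ is immediate from a direct product computation over the non-trivial orbits of $h$. To handle a whole coset $C = C_j = g G_{n+1}$ at once, exploit compactness: by continuity of the action and compactness of $C$, some $h \in C$ preserves $c$ if and only if for every finite $F \supseteq B_{n+1}$ the restriction $c|_F$ is invariant under some $\sigma$ in the \emph{finite} image $C|_F \subseteq \Sym(F)$. Every $\sigma \in C|_F$ coincides with $g$ on $B_{n+1}$ and in particular moves at least one vertex; the probability that $c|_F$ is $\sigma$-invariant is $2^{-(|F| - \mathrm{orb}(\sigma))}$. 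A Borel--Cantelli argument along an exhaustion $F_1 \subseteq F_2 \subseteq \cdots$ of $X$ will yield $\Prob[A_j] = 0$, provided $\sum_k |C|_{F_k}| \cdot \max_{\sigma \in C|_{F_k}} 2^{-(|F_k| - \mathrm{orb}(\sigma))}$ can be made finite; summing the resulting zero probabilities gives asymmetry almost surely.

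The main obstacle is exactly this last estimate. One needs a quantitative form of infinite motion: every local action $\sigma \in C|_{F_k}$ should move a sufficiently large fraction of the vertices of $F_k$ so that the exponential decay $2^{-(|F_k| - \mathrm{orb}(\sigma))}$ beats the number of possible local actions. Converting the qualitative hypothesis (each individual $h \in C$ moves infinitely many vertices) into such a uniform-over-the-coset statement is the heart of the argument; I would expect the proof to proceed by a layer-by-layer analysis along the BFS levels, combining connectivity (a vertex fixed together with all its neighbours forces identity on the whole component via the graph structure) with the finite branching of successive stabiliser quotients $G_k / G_{k+1}$ to control both the motion of each $\sigma$ and the size of $C|_{F_k}$ simultaneously.
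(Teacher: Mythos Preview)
Your reduction to countably many compact cosets via the ball stabilisers is correct and is indeed where local finiteness enters. But the step you yourself flag as ``the heart of the argument'' is a genuine gap, not a routine estimate. Infinite motion gives no quantitative lower bound on how many vertices of a finite ball an element of a given coset moves: it can happen that $|C|_{F_k}|$ grows exponentially in $k$ while every $\sigma \in C|_{F_k}$ moves only a sublinear number of vertices of $F_k$. In that regime your Borel--Cantelli sum diverges and the random $2$-colouring is \emph{not} almost surely asymmetric. This is not hypothetical: the random-colouring attack was well known and carried out under extra growth hypotheses (see e.g.\ \cite{lehner-random}), and the failure of the union bound in general is precisely why Tucker's conjecture stayed open for over a decade. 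The ``layer-by-layer analysis'' you gesture at does not supply the missing inequality; connectivity only tells you that an automorphism fixing a vertex and all its neighbours is the identity, which says nothing about how many vertices a \emph{non}-identity element must move inside $F_k$.

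The paper does not give an independent proof of this theorem; it quotes it as Babai's result and recovers it as the special case of Theorem~\ref{thm:main} where $X_0$ is a single vertex. The proof of Theorem~\ref{thm:main} is not probabilistic at all: it constructs, for each compact subgroup $G_n$, a sequence of finite blocks $U_1,U_2,\dots$ with $(G_n)_{(U_{i+1})}\leq (G_n)_{(U_i)}$ (Lemma~\ref{lem:compactsequence}) and then invokes Babai's structural Theorem~\ref{thm:babai} to colour $\bigcup_i U_i$ asymmetrically. The paper explicitly notes that Theorem~\ref{thm:babai} relies on the classification of finite simple groups. So the missing ingredient in your sketch---turning qualitative infinite motion into a bound strong enough to beat the branching---is exactly the deep content of Babai's work, and no elementary substitute is known.
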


The main result of this short note generalises the above result to the setting of locally compact permutation groups, where topological properties are with respect to the permutation topology, see Section \ref{sec:notation}. 

\begin{theorem}
\label{thm:main}
Let $G \subseteq \Sym(X)$ be a locally compact permutation group. If $G$ has infinite motion, then there is an asymmetric $2$-colouring of $X$.
\end{theorem}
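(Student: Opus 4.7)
The plan is a probabilistic argument on a (biased) random $2$-colouring $c \colon X \to \{0,1\}$, exploiting the compact open subgroup afforded by local compactness to tame the uncountable symmetry group.

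First I would fix a compact open subgroup of $G$: by local compactness, there is a finite $F \subseteq X$ with $K := G_F$ compact. Then $K$ is open in $G$, all $K$-orbits $O_1, O_2, \ldots$ on $X$ are finite (a compact subgroup of $\Sym(X)$ acts with finite orbits, because each orbit map is continuous into the discrete set $X$), and $G$ is second countable since the pointwise stabilisers of finite sets form a countable neighbourhood base at $\id$; consequently $G/K$ is countable. Enumerate $G = \bigsqcup_{n \geq 1} g_n K$ with $g_1 = \id$. The coloring $c$ will be chosen by independent coin flips with biases that may depend on $x$ (to be calibrated), and the target is
\[
\Prob\bigl[\mathrm{Stab}(c) \cap G \neq \{\id\}\bigr] \leq \Prob\bigl[\mathrm{Stab}(c) \cap K \neq \{\id\}\bigr] + \sum_{n \geq 2} \Prob\bigl[\mathrm{Stab}(c) \cap g_n K \neq \emptyset\bigr] < 1,
\]
where $\mathrm{Stab}(c) = \{g \in \Sym(X) : c \circ g = c\}$.

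For the $K$-term, $K$ is itself compact with every pointwise stabiliser $K_v$ open (hence closed, hence compact), its orbits are finite, and it has infinite motion inherited from $G$; this is precisely the sort of setting covered by Babai's theorem, whose proof really only uses compactness of vertex stabilisers of a closed permutation group together with infinite motion, and so should yield (for suitable biases) positive probability that no non-identity $k \in K$ preserves $c$. For each coset $g_n K$ with $n \geq 2$, every element has infinite support, and since $K$-orbits are finite each such element acts non-trivially on infinitely many orbits. The uncountable union over $k \in K$ in the coset term can then be handled using the profinite structure of $K$: at any finite depth $j$, the restriction $g_n K |_{O_1 \cup \cdots \cup O_j}$ takes only finitely many values, so the union over $k$ collapses to a finite union at each depth; combining this with independence of $c$ across orbits should give bounds $\Prob[\mathrm{Stab}(c) \cap g_n K \neq \emptyset] \leq \varepsilon_n$ decaying geometrically in suitable parameters.

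The central obstacle is arranging both requirements simultaneously: the biased colouring must support asymmetry for $K$ while keeping $\sum_{n \geq 2} \varepsilon_n$ strictly less than the slack. The natural remedy, modelled on Babai's argument, is to let the bias $p_x$ depend on a depth function of the $K$-orbit of $x$ from $F$, so that constraints coming from distant orbits combine multiplicatively and overpower the finite-depth uncountability within each coset. The essential novelty beyond Babai is this uniform control over an entire compact coset $g_n K$, rather than over a single automorphism, which is where the locally compact structure of $G$ is used in a crucial way.
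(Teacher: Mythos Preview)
Your approach rests on an unjustified assumption: that for a compact permutation group $K$ with infinite motion, a random (suitably biased) $2$-colouring is asymmetric with positive probability. Babai's theorem does not assert this. His argument is structural---it relies on the classification of finite simple groups to control the primitive constituents of the finite quotients of $K$---and produces one explicit colouring, not a set of positive measure. The reason Tucker's conjecture stayed open for over a decade is precisely that motion-lemma and union-bound methods fail for uncountable groups: the number of elements of $K$ restricted to a finite window can be comparable to the number of colourings of that window, and infinite motion on $X$ does not prevent individual elements from having arbitrarily small support on any fixed finite window. Your phrase ``should yield (for suitable biases) positive probability'' hides the very obstacle that required Babai's CFSG-based argument. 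The coset terms have the same defect: that $g_nK$ restricted to $O_1 \cup \cdots \cup O_j$ takes only finitely many values is true, but without quantitative control on how many, independence across orbits does not yield summable $\varepsilon_n$.

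The paper's proof is deterministic and uses Babai's result only as the black box stated in Theorem~\ref{thm:babai}. Rather than one compact open $K$ together with its cosets, it uses an increasing chain of compact subgroups $G_n = \bigcap_{i \geq n} G_{S_i}$, where the $S_i$ are the $G_{X_0}$-orbits. Any $g \in G$ either lies in some $G_n$, or maps $X_0$ into a countable residual family $\mathcal X_\infty$, in which case $g$ moves infinitely many $S_i$ off themselves. The set $X$ is then carved into disjoint finite pieces by an interleaving construction: on some pieces Theorem~\ref{thm:babai} is applied (after the preparation of Lemmas~\ref{lem:compactsequence} and~\ref{lem:infinitesubsequence}) to break each $G_n$; on others one simply colours a suitable $S_{i(k)}$ with $0$ and its image $gS_{i(k)}$ with $1$ to break each residual coset by hand. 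No probability enters.
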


We point out that this result is not independent of \cite{babai-imc}; it relies on the main technical result of that paper which is paraphrased in Theorem \ref{thm:babai} below. 
Since every closed, subdegree finite permutation group is locally compact, Theorem \ref{thm:main}  confirms a generalisation of Tucker's conjecture which was stated by Imrich, Smith, Tucker, and Watkins in \cite{istw} and highlighted as an open problem in \cite{babai-imc}.

\section{Notation and preliminiary results}
\label{sec:notation}

Let $X$ be a countably infinite set. A \emph{permutation group} on $X$ is a group $G \leq \Sym(X)$ together with the induced action on $X$. As usual, for $Y \subseteq X$ we denote by $G_Y$ the \emph{setwise stabiliser}, and by $G_{(Y)}$ the \emph{pointwise stabiliser} of $Y$ in $G$. If $Y = \{x\}$ write $G_x$ instead of $G_{\{x\}}$.
A \emph{suborbit} of $G$ with respect to $x$ is an orbit of $G_x$. We call $G$ \emph{subdegree finite} if all suborbits are finite.

The group $\Sym (X)$ can be endowed with the topology of pointwise convergence on $X$ which in this context is often referred to as the \emph{permutation topology}. This topology is well studied, all results we need can for instance be found in \cite{cameron-topology}. A basis of this topology is given by sets of the form $\{g \in \Sym (X) \mid gx_i = y_i \text{ for }1 \leq i \leq k\}$, where $k \in \mathbb N$ and $x_1, \dots x_k, y_1, \dots y_k$ are elements of $X$. We say that a permutation group $G \leq \Sym (X)$ is \emph{closed}, \emph{compact}, or \emph{locally compact}, if it has the respective property as a subset of $\Sym (X)$ endowed with the permutation topology. It is known that a permutation group $G$  is compact, if and only if it is closed and all of its orbits are finite. Moreover, $G$ is locally compact if and only if it is closed and there is a finite set $Y \subseteq X$ such that all orbits of $G_Y$ are finite. Note that every closed, subdegree finite permutation group is locally compact because point stabilisers in such groups are compact, but the converse is not true. 

In what follows it will be useful to be able to interpret a group $G$ as a permutation group on different sets. The following observation tells us when this is possible.

\begin{observation}
\label{obs:smallerset}
Let $G$ be a permutation group on $X$ and $Y \subseteq X$ such that $GY = Y$ and for every $g \in G \setminus\{\id\}$ there is some $y \in Y$ such that $gy \neq y$. Then $G$ can be seen as a permutation group on $Y$.
\end{observation}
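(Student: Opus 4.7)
The plan is to produce an explicit embedding $\varphi\colon G\hookrightarrow \Sym(Y)$ which reproduces the action of $G$ on $Y$, and then to check that it is a group homomorphism and injective.

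First I would use the hypothesis $GY = Y$ to argue that for each $g\in G$, restricting $g$ to $Y$ gives a well-defined map $g|_Y\colon Y\to Y$. Indeed, $gY\subseteq GY = Y$, and applying the same to $g^{-1}$ shows $g^{-1}Y\subseteq Y$, so $g|_Y$ is a bijection of $Y$. This gives the candidate map $\varphi(g) := g|_Y\in\Sym(Y)$.

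Next I would verify that $\varphi$ is a group homomorphism: since the action of $G$ on $X$ satisfies $(gh)x = g(hx)$ for all $x\in X$, the same holds for all $x\in Y$, hence $\varphi(gh) = \varphi(g)\varphi(h)$. This is the routine functoriality check.

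Finally, the faithfulness condition is exactly what guarantees injectivity of $\varphi$: the kernel of $\varphi$ consists of those $g\in G$ with $gy = y$ for all $y\in Y$, and by hypothesis the only such element is $\id$. Thus $\varphi$ is an injective homomorphism from $G$ into $\Sym(Y)$, and identifying $G$ with its image $\varphi(G)$ presents $G$ as a permutation group on $Y$ with action inherited from that on $X$. There is no genuine obstacle here; the statement is essentially a bookkeeping lemma, and the only thing to be careful about is confirming that both the well-definedness (from $GY=Y$) and the faithfulness (from the second hypothesis) are used exactly once.
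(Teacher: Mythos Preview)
Your argument is correct and is exactly the standard verification one would give; the paper itself states this as an observation without proof, treating it as self-evident. There is nothing to add.
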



%

A \emph{$k$-colouring} of $X$ is a map $c \colon X \to C$ with $|C|=k$; throughout this short note we will only consider the case $k=2$ and let $C = \{0,1\}$. We call a $k$-colouring $c$ \emph{asymmetric} with respect to a subset $H \subseteq \Sym(X)$, if for every $g \in H \setminus\{id\}$ there is some $x \in X$ such that $c(x) \neq c(gx)$. Note that we do not require $H$ to be a subgroup.

The proof of Theorem \ref{thm:main} makes use of the following technical result, which is a rephrasing of \cite[Theorem 1.2]{babai-imc}.

\begin{theorem}
\label{thm:babai}
Let $U_i, i \in \mathbb N$ be disjoint finite sets. Let $G$ be a permutation group on $X = \bigcup_{i \in \mathbb N} U_i$ whose action setwise fixes each $U_i$.
If $G_{(U_{i+1})} \leq G_{(U_{i})}$ for every $i \in \mathbb N$, then there is an asymmetric $2$-colouring of $X$.
\end{theorem}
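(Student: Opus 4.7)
The plan is to construct the colouring inductively along the chain of pointwise stabilisers $H_i := G_{(U_i)}$. By hypothesis these form a descending chain $G \geq H_1 \geq H_2 \geq \dots$; moreover $\bigcap_i H_i$ fixes every point of $X = \bigcup_i U_i$ and hence is trivial. Every non-identity $g \in G$ therefore lies in a unique \emph{layer} $H_{i-1} \setminus H_i$ (setting $H_0 := G$): such a $g$ fixes $U_1,\dots,U_{i-1}$ pointwise yet moves some point of $U_i$, and since $G$ setwise stabilises $U_i$ it acts there as a non-trivial permutation of a finite set.

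My first instinct would be to colour each $U_i$ independently so that the colouring of $U_i$ kills the $i$-th layer. The group $H_{i-1}$ acts on the finite set $U_i$ with kernel $H_i$, which yields a faithful action of the finite group $\overline H_{i-1} := H_{i-1}/H_i$ on $U_i$; a $2$-colouring $c_i$ of $U_i$ that is asymmetric with respect to $\overline H_{i-1}$ would then combine via $c := \bigcup_i c_i$ into a colouring with no non-identity $G$-stabiliser. The obstacle is that such a $c_i$ need not exist: not every finite permutation group on a finite set admits an asymmetric $2$-colouring, the full symmetric group on $U_i$ being the obvious obstruction. The real content of the theorem must therefore \emph{transfer} the colouring task that fails on $U_i$ onto later stages.

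The natural attempt at such a transfer is probabilistic: colour every point of $X$ independently and uniformly from $\{0,1\}$, bound for each $g$ in the $i$-th layer the probability that $g$ preserves the colouring on $U_i$ by roughly $2^{-r_i(g)/2}$, where $r_i(g)$ denotes the number of points of $U_i$ moved by $g$, and union-bound across both $g$ and $i$. The hard part — and where I expect the naive approach to break down — is summability: the hypothesis offers no quantitative control of $|\overline H_{i-1}|$ against $r_i(g)$, so individual layers can be ``costly'', and failures on one layer must be absorbed by the freedom remaining in deeper layers, for instance by partitioning each $U_i$ into carefully chosen blocks that simultaneously serve several layers at once. Reproducing this combinatorial bookkeeping is where the substance of Babai's argument lies and is the step where the bulk of the real work would go.
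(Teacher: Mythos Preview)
The paper does not prove this statement at all: it is explicitly quoted as a rephrasing of \cite[Theorem~1.2]{babai-imc} and the surrounding text says that ``its proof depends on the classification of finite simple groups and spans several pages''. So there is nothing to compare your attempt against here except Babai's original argument.

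Your write-up is an honest outline rather than a proof, and you say so yourself in the last paragraph. The structural observations are correct: the chain $H_i = G_{(U_i)}$ is descending with trivial intersection, every non-identity element lives in a unique layer $H_{i-1}\setminus H_i$, and the quotient $H_{i-1}/H_i$ acts faithfully on the finite set $U_i$. You also correctly identify why the naive ``colour each $U_i$ separately'' idea fails (a finite permutation group need not admit an asymmetric $2$-colouring) and why the straightforward random-colouring union bound does not close (no quantitative link between $|H_{i-1}/H_i|$ and the support sizes on $U_i$).

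What is genuinely missing is exactly the part you flag: the mechanism that transfers unresolved layers forward. Babai's actual argument does not rescue the probabilistic approach by clever bookkeeping alone; it invokes structural results about finite permutation groups (ultimately resting on the classification of finite simple groups) to control when the bad cases can occur and how they interact across layers. Without that input the outline cannot be completed, so as it stands this is a correct diagnosis of the difficulty but not a proof.
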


This result is highly non-trivial; its proof depends on the classification of finite simple groups and spans several pages.

\section{Proof of the main result}

Before proving our main result, we set up some notation which will be used throughout the proof. We fix a countable set $X$ and a locally compact permutation group $G$ on $X$ with infinite motion. Moreover, we fix a finite subset $X_0 \subseteq X$ such that $G_{X_0}$ only has finite orbits, and we let $(S_n)_{n \in \mathbb N}$ be an enumeration of the orbits of $G_{X_0}$.
For each $n \in \mathbb N$ we let 
\[G_n = \bigcap_{i \geq n} G_{S_i}.\]
Note that the set $X \setminus \bigcup_{i \geq n} S_i$ is finite and thus $G_n$ only has finite orbits. Moreover, $G_n$ is closed because the sets $\{g\in \Sym(X) \mid gx=y\}$ are open for all pairs $(x,y) \in X^2$, and the complement of  $G_n$ can be written as a union of $\Sym(X) \setminus G$ with countably many such sets. Thus $G_i$ is compact. 

Let $\mathcal X_n =\{g X_0 \mid g \in G_n\}$, and let $\mathcal X_{\infty} = \{Y \subseteq X \mid |Y| = |X_0|\} \setminus \bigcup_{i \in \mathbb N}X_i$. Finally, for every $Y \subseteq X$ with $|Y| = |X_0|$, we let $H(Y) = \{g \in G \mid gX_0 = Y\}$. Note that $H(Y)$ is generally not a subgroup of $G$, but a coset of the stabiliser $G_{X_0}$, or the empty set (in case there is no $g \in G$ which maps $X_0$ to $Y$).

\begin{lemma}
\label{lem:suborbitimages}
If $g,h \in H(Y)$, then $gS_n = hS_n$ for every $n \in \mathbb N$. In particular, if $Y \in \mathcal X_n$, then $H(Y) \subseteq G_n$.
\end{lemma}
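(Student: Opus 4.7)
The plan is to unwind the definitions and use a basic fact about group actions: each element of a group maps every orbit of that group to itself. For the first assertion, suppose $g,h \in H(Y)$, so that $gX_0 = hX_0 = Y$. Then $h^{-1}g$ fixes $X_0$ setwise, i.e.\ $h^{-1}g \in G_{X_0}$. Since $S_n$ is by definition an orbit of $G_{X_0}$, the element $h^{-1}g$ stabilises $S_n$ setwise, giving $(h^{-1}g)S_n = S_n$ and hence $gS_n = hS_n$.

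For the ``in particular'' part, assume $Y \in \mathcal X_n$, so there is a witness $g_0 \in G_n$ with $g_0 X_0 = Y$. Let $h \in H(Y)$ be arbitrary; then $h \in H(Y)$ and $g_0 \in H(Y)$ as well, so the first part of the lemma applies and yields $h S_i = g_0 S_i$ for every $i \in \mathbb N$. By the definition of $G_n = \bigcap_{i \geq n} G_{S_i}$, we have $g_0 S_i = S_i$ whenever $i \geq n$, and therefore $h S_i = S_i$ for all $i \geq n$. This is exactly the condition $h \in G_n$.

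The argument is essentially a direct consequence of the definitions, so I do not expect a serious obstacle. The only point one has to take care of is that $G_{X_0}$ denotes the \emph{setwise} stabiliser (as specified in Section~\ref{sec:notation}), which is what allows the deduction $h^{-1}g \in G_{X_0}$ from $gX_0 = hX_0$; this is why the suborbits $S_n$ (orbits of the setwise stabiliser) are the natural objects to track, and why no further structural input about $G$ — beyond what is already contained in the set-up — is needed for this lemma.
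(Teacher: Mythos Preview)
Your proof is correct and follows essentially the same argument as the paper's: both reduce the first part to observing that $h^{-1}g$ (or $g^{-1}h$) lies in $G_{X_0}$ and hence stabilises each orbit $S_n$, and both deduce the second part by taking a witness $g_0 \in G_n \cap H(Y)$ and applying the first part. Your write-up is slightly more detailed, but the route is identical.
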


\begin{proof}
For the first part, note that otherwise $g^{-1}h \in G_{X_0}$ would not fix $S_n$ setwise, contradicting the assumption that $S_n$ is an orbit of $G_{X_0}$. For the second part we observe that $Y \in \mathcal X_n$ if and only if there is $g \in G$ with $gX_0 = Y$ and $gS_i = S_i$ for every $i \geq n$. It follows from the first part that $hS_i = S_i$ for every $h \in H(Y)$ and every $i\geq n$, and therefore $H(Y) \subseteq G_n$.
\end{proof}

In what follows we will define a partition of the set $X$, and colour some parts of the partition to ensure that the colouring is asymmetric with respect to $G_n$ for $n \in \mathbb N$, and other parts to ensure that it is asymmetric with respect to $H(Y)$ for $Y \in \mathcal X_\infty$. By the above lemma this is enough to ensure that the resulting colouring is asymmetric with respect to $G$.

The following lemma will be used to construct the parts that are used for the groups $G_n$. It tells us that there are many different subsets of $X$ for which $G_n$ satisfies the condition of Theorem \ref{thm:babai}.

\begin{lemma}
\label{lem:compactsequence}
For each $n \in \mathbb N$ there is a sequence of pairwise disjoint, finite subsets $U_i \subseteq X$ such that the following properties are satisfied.
\begin{enumerate}
    \item $G_n$ setwise fixes each $U_i$.
    \item For every $g \in G_n \setminus \{id\}$ there is some $i \in \mathbb N$ such that $g$ non-trivially on $U_i$.
    \item $(G_n)_{(U_{i+1})} \leq (G_n)_{(U_{i})}$ for all $i \in \mathbb N$.
\end{enumerate}
\end{lemma}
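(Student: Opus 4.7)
The plan is to build each $U_i$ as a disjoint union of finitely many of the $G_n$-orbits $S_j$ with $j \geq n$. I start with the structural observation that $G_{X_0} \leq G_n$, since $G_{X_0}$ preserves every $S_j$ setwise, and that $G_{X_0}$ already acts transitively on each $S_j$. Consequently, for every $j \geq n$ the set $S_j$ is a single $G_n$-orbit, and its pointwise stabiliser $(G_n)_{(S_j)}$ is a clopen subgroup of the compact group $G_n$. Moreover, the union $\bigcup_{j \geq n} S_j$ is cofinite in $X$.

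The key technical step will be the following compactness claim: for every open subgroup $K \leq G_n$ and every finite set $T \subseteq \{n, n+1, \ldots\}$, there exist finitely many indices $j_1, \ldots, j_r \in \{n, n+1, \ldots\} \setminus T$ with
\[
\bigcap_{s=1}^{r} (G_n)_{(S_{j_s})} \leq K.
\]
To prove this, I will view the clopen sets $A_j := G_n \setminus (G_n)_{(S_j)}$ (for $j \geq n$, $j \notin T$) as an open cover of the compact set $G_n \setminus K$: if some $g$ lay outside every $A_j$, then $g$ would fix the cofinite set $\bigcup_{j \geq n,\, j \notin T} S_j$ pointwise, hence move only finitely many points, contradicting the infinite motion of $G$ unless $g = \id \in K$. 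A finite subcover then supplies the desired indices.

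With the claim in hand the construction proceeds by a routine induction with a priority enumeration $(m_i)_{i \in \N}$ of $\{n, n+1, \ldots\}$. I set $U_1 = S_{m_1}$; given $U_1, \ldots, U_i$ involving the finite set $T_i$ of orbit indices, I apply the claim with $K = (G_n)_{(U_i)}$ and $T = T_i$ to obtain indices $j_1, \ldots, j_r$, and define $U_{i+1}$ as the union of the corresponding orbits $S_{j_s}$, enlarged by $S_{m_{i+1}}$ if this orbit has not yet been used. Properties~(1) and~(3) are then immediate from the construction; for~(2), the priority rule forces $\bigcup_i U_i \supseteq \bigcup_{j \geq n} S_j$, so any element of $\bigcap_i (G_n)_{(U_i)} = (G_n)_{(\bigcup_i U_i)}$ has support contained in the fixed finite set $X \setminus \bigcup_{j \geq n} S_j$, and must therefore be trivial by infinite motion.

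The decisive difficulty is condition~(3): converting a sequence of pairwise disjoint $G_n$-invariant sets into one with nested pointwise stabilisers. The compactness claim is precisely what resolves this, and it relies essentially on both hypotheses, local compactness of $G$ (via compactness of $G_n$) and infinite motion.
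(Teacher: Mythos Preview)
Your argument is correct and follows the same overall strategy as the paper: build each $U_i$ as a finite union of orbits $S_j$ with $j \geq n$, and use a compactness principle to secure the nesting condition~(3). The executions differ in two places. First, for the key step you invoke topological compactness of $G_n$ directly, covering the closed set $G_n \setminus K$ by the open sets $A_j = G_n \setminus (G_n)_{(S_j)}$ and extracting a finite subcover; the paper instead argues sequentially, taking for each $K$ an element $g_K$ acting non-trivially on $U_i$ but fixing $S_{k(i)+1},\ldots,S_K$, passing to a subsequence that agrees on the finite complement, and observing that the limit lies in $G$ (by closedness) but has finite support. Your version is cleaner and makes explicit use of the compactness of $G_n$ established earlier in the paper. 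Second, the paper takes the $U_i$ to be \emph{consecutive} blocks $S_{k(i)+1}\cup\cdots\cup S_{k(i+1)}$, so that $\bigcup_i U_i = \bigcup_{j \geq n} S_j$ automatically and property~(2) is immediate; you allow non-contiguous index sets and compensate with a priority enumeration to force coverage. Both choices work; the paper's avoids the bookkeeping of the priority scheme, while yours isolates a slightly more flexible compactness claim (the indices may avoid any prescribed finite set $T$).
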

\begin{proof}
We define the sequence $U_i$ by inductively defining values $k(i)$ for $i \in \mathbb N$ and setting 
\[
U_i = \bigcup_{k(i) < j \leq k(i+1)} S_j.
\]
Note that this implies that $G_n$ setwise fixes $U_i$ because $U_i$ is a union of orbits with respect to $G_n$. Moreover, a non-trivial element $g \in G_n$ which acts trivially on every $U_i$ would only permute elements of the finite set $X \setminus \bigcup_{j > k(1)}U_j$, and hence contradict the assumption that $G$ has infinite motion; in particular the first two properties will automatically be satisfied by sets constructed as above.

We start our inductive construction by setting $k(1) = n-1$ and $k(2) = n$, and thus $U_1 = S_n$. Now assume that $k(i)$ has already been defined. 

We claim that we can find some $K \in \mathbb N$ such that whenever $g \in G_n$ acts non-trivially on $U_i$, then it also acts non-trivially on at least one $S_j$ for $k(i) < j \leq K$. Assume for a contradiction that this is not possible. Then for every $K \in \mathbb N$ there is some $g_j \in G_n$ which acts non-trivially on $U_i$ but pointwise fixes $S_j$ for $k(i) < j \leq K$. Since the set $B := X \setminus \bigcup_{j\geq n} S_j$ is finite we may assume without loss of generality (by passing to a subsequence) that all $g_j$ agree on $B$. The permutation $g$ which agrees with the $g_j$ on $B$ and fixes all elements in $X \setminus B$ is an accumulation point of the sequence $g_j$. It follows that $g \in G$ because every $g_j$ is contained in $G$ and $G$ is closed. This is a contradicts the infinite motion of $G$ since $g$ only moves finitely many elements of $X$. This finishes the proof of the claim. 

To complete our inductive construction, we set $k(i+1) = K$ and note that this implies that any $g \in G_n$ which acts non-trivially on $U_i$ also acts non-trivially on $U_{i+1}$, and hence the sets $U_i$ also satisfy the third property claimed in the lemma.
\end{proof}

\begin{lemma}
\label{lem:infinitesubsequence}
Let $U_i$ be as in Lemma \ref{lem:compactsequence}, and let $I \subseteq \mathbb N$ be an infinite set. Then $G_n$ and $U_I = \bigcup_{i \in I} U_i$ satisfy the conditions of Observation \ref{obs:smallerset}, and thus $G_n$ can be seen as a permutation group on $U_I$. Moreover, there is an asymmetric $2$-colouring of $U_I$
\end{lemma}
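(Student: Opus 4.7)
The plan is to separate the two assertions: the structural statement that $G_n$ acts faithfully on $U_I$ is handled via Observation \ref{obs:smallerset}, and then the existence of an asymmetric $2$-colouring of $U_I$ follows by a direct application of Theorem \ref{thm:babai} to a re-indexed subsequence of the $U_i$.

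First I would verify the two hypotheses of Observation \ref{obs:smallerset} for the pair $(G_n, U_I)$. Invariance $G_n U_I = U_I$ is immediate from property (1) of Lemma \ref{lem:compactsequence}, since $U_I$ is a union of $G_n$-invariant sets. For the faithfulness condition I would exploit property (3). Iterating the inclusion $(G_n)_{(U_{i+1})} \leq (G_n)_{(U_i)}$ gives $(G_n)_{(U_j)} \leq (G_n)_{(U_i)}$ for all $j \geq i$; equivalently, if $g \in G_n$ acts non-trivially on some $U_i$, then $g$ acts non-trivially on every $U_j$ with $j \geq i$. By property (2), every $g \in G_n \setminus \{\id\}$ acts non-trivially on some $U_{i_0}$, and since $I$ is infinite there exists $j \in I$ with $j \geq i_0$, so $g$ moves a point in $U_j \subseteq U_I$. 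Hence $G_n$ can indeed be regarded as a permutation group on $U_I$.

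For the second part I would enumerate $I = \{i_1 < i_2 < \dots\}$ and set $V_k = U_{i_k}$. The sequence $(V_k)_{k \in \mathbb N}$ consists of pairwise disjoint finite sets whose union is $U_I$, the group $G_n$ setwise fixes each $V_k$ by property (1), and the nested pointwise-stabiliser condition $(G_n)_{(V_{k+1})} \leq (G_n)_{(V_k)}$ again follows from the iterated form of property (3). Thus Theorem \ref{thm:babai}, applied to the permutation group $G_n$ on $U_I$ with the sequence $(V_k)$, delivers the desired asymmetric $2$-colouring of $U_I$.

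I do not anticipate any real obstacle: the lemma is essentially a packaging step combining Lemma \ref{lem:compactsequence} with Theorem \ref{thm:babai}, and the only genuine observation is that property (3) is preserved under passage to an infinite subsequence, which is a one-line consequence of transitivity of the stabiliser inclusions.
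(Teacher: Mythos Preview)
Your proposal is correct and follows essentially the same approach as the paper: invariance of $U_I$ from property (1), faithfulness via properties (2) and (3) together with the infinitude of $I$, and then Theorem \ref{thm:babai} applied to the subsequence $(U_i)_{i\in I}$. The paper's proof is merely a terser version of what you wrote; your explicit iteration of the stabiliser inclusions and the re-indexing $V_k = U_{i_k}$ are exactly the details the paper leaves implicit.
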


\begin{proof}
Since $G_n$ fixes each individual $U_i$, it clearly also fixes $U_I$. Any $g \in G_n\setminus\{\id\}$ acts non-trivially on some $U_i$, and thus also on each $U_j$ for $j>i$. Since the set $I$ is infinite, this implies that $g$ acts non-trivially on $U_I$. For the `moreover' part note that the sequence $U_i, i \in I$ satisfies the conditions of Theorem \ref{thm:babai}.
\end{proof}

\begin{proof}[Proof of Theorem \ref{thm:main}]
For every $n \in \mathbb N$ we let $U_i(n)$ be the sequence obtained from Lemma~\ref{lem:compactsequence} for $G_n$. Pick a sequence $(s_k)_{k \in \mathbb N}$ of natural numbers such that every $n \in \mathbb N$ appears infinitely often in this sequence. Let $(Y_k)_{k \in \mathbb N}$ be an enumeration of $\mathcal X_\infty$.

Starting with $I_0(n) = \emptyset$ for every $n$, we inductively define finite sets $I_k(n) \subseteq \mathbb N$ for every $n \in \mathbb N$ and finite sets $Z_k \subseteq X$ as follows. Let 
\[
W_k = \bigcup_{i \in I_{k-1}(n)} U_i(n)\cup \bigcup_{i < k} Z_i.
\]
Let $g \in H_{Y_k}$. Since $Y_k \in \mathcal X_{\infty}$, there are infinitely many $i$ such that $gS_i \neq S_i$; note that $g S_i$ does not depend on the particular choice of $g$ by Lemma \ref{lem:suborbitimages}. We can pick some $i(k)$ such that $g S_{i(k)} \neq S_{i(k)}$ and $S_{i(k)} \cup g S_{i(k)}$ is disjoint from $W_k$. Set $Z_k = S_{i(k)} \cup g S_{i(k)}$.

Next we define the sets $I_k(n)$. For $n \neq s_k$ we set $I_{k}(n) = I_{k-1}(n)$. For $n = s_k$, there is some $i \in \mathbb N$ such that $U_i(n)$ is disjoint from $W_k \cup Z_k$. We set $I_k(n) = I_{k-1}(n) \cup \{i\}$.

Finally let $I(n) = \bigcup_{k \in \mathbb N} I_k(n)$ and define a colouring of $X$ as follows. For $Z_k$ we colour $S_{i(k)}$ with colour $0$ and $Z_k \setminus S_{i(k)}$ with colour $1$. Colour $U_{I(n)} = \bigcup_{i \in I(n)} U_i$ by an asymmetric $2$-colouring with respect to $G_n$; this is possible by Lemma \ref{lem:infinitesubsequence} because $n$ appears infinitely often in the sequence $s_k$ and thus $I(n)$ is infinite. If there are any remaining uncoloured elements of $X$, then colour them arbitrarily. 

Assume now that there is some $g \in G$ which preserves the resulting colouring. Observe that $g \notin G_n$ because of the colouring on $U_{I(n)}$. Hence $gX_0 = Y_k$ for some $k \in \mathbb N$. But then (by Lemma \ref{lem:suborbitimages}) $g$ maps some element of $S_{i(k)}$ to an element of $Z_k \setminus S_{i(k)}$. The former is coloured with colour $0$, the latter is coloured with colour $1$. This contradicts the assumption that $g$ was colour preserving.
\end{proof}

\bibliographystyle{plain}
\bibliography{bibliography.bib}
\end{document}